%% file: main.tex
\documentclass[a4paper,11pt]{article}
\usepackage[margin=24mm]{geometry}
\input{preamble.tex}
\title{Contour Computation of Linearized Painlev\'e II and IV Solutions
with Monodromy-Based Error Control}
\author{Oleg M. Kiselev\\[3pt]
\small Innopolis University, Innopolis, Russia\\
\small\texttt{o.kiselev@innopolis.ru}}
\date{}
\begin{document}
\maketitle
\begin{abstract}
\input{abstract.tex}
\end{abstract}
\noindent\textbf{Keywords:} Painlev\'e equations; linearization;
contour integrals; Stokes data; isomonodromic deformations;
numerical error control.
\input{body.tex}
\input{declarations.tex}

\input{references.tex}
\end{document}

%% file: preamble.tex
\usepackage[T1]{fontenc}
\usepackage[utf8]{inputenc}
\usepackage[english]{babel}
\usepackage{amsmath,amssymb,amsthm,mathtools}
\usepackage{graphicx,booktabs,array,tikz}
\usetikzlibrary{arrows.meta}
\usepackage{microtype}
\usepackage[hidelinks]{hyperref}
\usepackage{placeins}
\newtheorem{proposition}{Proposition}
\newcommand{\ii}{\mathrm{i}}
\newcommand{\PII}{\mathrm{P}_{\mathrm{II}}}
\newcommand{\PIV}{\mathrm{P}_{\mathrm{IV}}}
\newcommand{\diag}{\operatorname{diag}}
\newcommand{\norm}[1]{\lVert#1\rVert_{\max}}
\newcommand{\dd}{\,\mathrm{d}}
\DeclareMathOperator{\Sym}{Sym}

%% file: abstract.tex
We study the numerical evaluation of contour integral representations for
solutions of the linearized second and fourth Painlev\'e equations.
The construction combines a nonlinear background solution, continuation of
canonical Lax-pair columns, normalization matching, and quadrature over
cycles with decaying branches in distinct sectors. Accuracy is assessed
using a reference fundamental matrix, differential-equation residuals,
and variations of monodromy data computed independently from the spectral
problem. An exact identity describes the propagation of initial-basis
errors. A stagewise a posteriori refinement criterion uses increments in
monodromy variations to allocate numerical accuracy. The contour algorithm
is compared with the Dormand--Prince method on a regular background, in
a region of rapid solution growth, and through hard loss of stability
followed by approximately regular oscillations. Accumulated offsets in
monodromy variations and subsequent drift are measured separately.
The experiments show how drift depends on spectral conditioning,
quadrature accuracy, and error allocation between initial and subsequent
parts of the computation. Five additional initial-data and parameter
cases demonstrate the dependence of comparative performance on the
nonlinear background.

%% file: body.tex
\section{Introduction}
Linearized equations describe the sensitivity of a nonlinear solution to
its initial data and enter the construction of asymptotic corrections.
Across a transition region, a small change in the initial state may
substantially alter the amplitude or phase of subsequent oscillations.
Numerical continuation of such variations therefore requires control of
both the solution and the conserved quantities connecting different
dynamical regimes. Explicit integral representations provide a way to
evaluate solutions at distant points and to examine the transfer of this
information through a transition layer.

Painlev\'e equations arise as integrable models of nonlinear transition
layers and bifurcation phenomena. Their connection with the second
Painlev\'e equation was studied by Haberman~\cite{Haberman77,Haberman79}.
Suleimanov~\cite{Suleimanov92,Suleimanov94} developed the idea of nonlinear
counterparts of the special functions of wave catastrophes.
A hierarchy of asymptotic problems for all six Painlev\'e equations was
discussed in~\cite{KiselevSuleimanov99}. Hard loss of stability for the
second equation was investigated in~\cite{Kiselev01}; related problems
of separatrix crossing and capture into autoresonance were considered
in~\cite{KiselevGlebov03,KiselevGlebov07}.

The isomonodromic approach associates a nonlinear equation with a
compatible pair of linear systems~\cite{ItsNovokshenov86,FIKN06}.
At fixed equation parameters, the nonlinear evolution preserves the
monodromy data. Differentiating these data with respect to initial
conditions gives conserved quantities for the linearized problem.
They provide numerical diagnostics whose values can be recomputed from
the spectral system at each observation point.

Numerical Riemann--Hilbert methods and their behavior in asymptotic regimes
have been studied, in particular, in~\cite{OlverTrogdon12}.
The present work starts from the integral formulas
of~\cite{Kiselev24,Kiselev26} and investigates their numerical implementation
for linearized solutions on a prescribed nonlinear background.
The spectral systems are solved by local Taylor expansions, whose
coefficients also determine the quadratic kernels and their integrals.

The main result is a computational construction and its verification for
two different spectral structures and several background regimes.
Its algorithmic basis consists of the exact initial-basis error identity
\eqref{eq:initial-error}, the monodromy error decomposition
\eqref{eq:budget-identity}, and the successive-refinement criterion in
Section~\ref{sec:budget}. On the regular $\PII$ example, matching the
solution errors gives Stokes-variation drift smaller by factors of
$1.2$--$1.3$ for the contour implementation. For the selected complex
$\PIV$ background, the factors are $1.9$ and $14$ at two accuracy levels;
the direct method is faster and preserves the Wronskian more accurately.
The hard-loss-of-stability experiment for $\PII$ continues a
nonoscillatory approximation into a sequence of approximately regular
oscillations. Additional tolerance experiments show that allocating
accuracy between different parts of the interval substantially affects
subsequent drift even when the overall solution errors are close.
Five further initial-data and parameter cases test the persistence of
these observations. The RK-to-contour drift ratio ranges from $0.84$ to
$1.54$ for regular $\PII$ and equals $15.87$ and $12.69$ for the two
additional $\PIV$ backgrounds.

The computations are supported by symbolic checks of the Lax-pair
compatibility conditions, kernel identities, changes of variables, and
formal coefficients, together with numerical refinement, residual, and
Wronskian tests. Monodromy variations are evaluated through separate
spectral sensitivity calculations. This combination tests the formulas
and their implementation by complementary procedures.

Section~\ref{sec:formulas} specifies the equations, normalizations, and
contour representations. Section~\ref{sec:algorithm} describes the
computational stages. Error control and monodromy diagnostics are given
in Sections~\ref{sec:error} and~\ref{sec:monodromy}, respectively.
Section~\ref{sec:experiments} presents the numerical experiments;
Section~\ref{sec:discussion} discusses their interpretation and scope.
The appendices contain coefficient recurrences and reproduction parameters.

\section{Linearized equations and contour representations}\label{sec:formulas}
\subsection{General construction}
Let a nonlinear equation be written as $X_x=f(x,X)$ with fixed parameters.
Its variation satisfies
\begin{equation}\label{eq:tangent}
 Y_x=M(x)Y,\qquad M(x)=f_X(x,X(x)).
\end{equation}
For the scalar normal form, write $y_{xx}=U(x)y$ and $Y=(y,y_x)^T$.
The coordinates used below give $\operatorname{tr}M=0$.
The fundamental matrix is normalized by $F(x_0)=I$, hence $\det F(x)=1$.

Let $z(\lambda,x)$ be a column solution of the compatible system
\begin{equation*}
 z_\lambda=A(\lambda,x)z,\qquad z_x=B(\lambda,x)z.
\end{equation*}
For the pairs considered here, quadratic expressions $Q,R$ in $z$ satisfy
\begin{equation}\label{eq:identity}
 (\partial_x^2-U)Q=\partial_\lambda R.
\end{equation}
Their explicit forms are given below. A contour cycle means a finite
linear combination of paths $\gamma_j$, with specified columns $z_j$
and constant coefficients $c_j$:
\begin{equation}\label{eq:cycle}
 y_\Gamma(x)=\sum_j c_j\int_{\gamma_j}Q(z_j(\lambda,x))\dd\lambda.
\end{equation}
All paths are oriented from infinity to a common finite vertex of the
cycle. Where branching occurs, the lifts of the paths are specified.

\begin{proposition}\label{prop:cycle}
Suppose the paths are independent of $x$, and the integrals in
\eqref{eq:cycle}, together with those of the first two $x$-derivatives,
converge locally uniformly in $x$. If the total boundary term
$\sum_jc_j[R(z_j)]_{\partial\gamma_j}$ vanishes, then $y_\Gamma$
satisfies $y_{xx}=Uy$.
\end{proposition}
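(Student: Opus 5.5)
The argument differentiates under the integral sign, applies the identity \eqref{eq:identity} pointwise on each path, and reduces everything to boundary terms. For each $j$, the path $\gamma_j$ does not depend on $x$, and the integrals of $Q(z_j)$, $\partial_xQ(z_j)$ and $\partial_x^2Q(z_j)$ converge locally uniformly in $x$. The standard theorem on differentiating improper parametric integrals then allows two interchanges of $\partial_x$ with $\int_{\gamma_j}$. On a compact $x$-interval I will exhaust the infinite path by truncations $\gamma_j^{(L)}$, from a point $\lambda_L$ (with $|\lambda_L|\to\infty$) to the finite vertex. On each truncation, differentiation under the integral is elementary, since $z_j$ is analytic in $(\lambda,x)$. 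Uniform convergence of the derivative integrals then passes the result to the limit. This gives
\begin{equation*}
 \partial_x^2 y_\Gamma(x)=\sum_j c_j\int_{\gamma_j}\partial_x^2Q(z_j(\lambda,x))\dd\lambda .
\end{equation*}
Here $z_j$ is the chosen column, or its specified lift where branching occurs. It satisfies both equations of the compatible system, so $Q(z_j)$ and $R(z_j)$ are smooth functions of $(\lambda,x)$ along the lifted path.

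Next, subtracting $U(x)y_\Gamma(x)=\sum_jc_j\int_{\gamma_j}U(x)Q(z_j)\dd\lambda$ and using \eqref{eq:identity} for each column gives
\begin{equation*}
 y_{\Gamma,xx}-Uy_\Gamma=\sum_jc_j\int_{\gamma_j}\partial_\lambda R(z_j)\dd\lambda
 =\sum_jc_j\lim_{L\to\infty}\bigl(R(z_j)(\text{vertex})-R(z_j)(\lambda_L)\bigr).
\end{equation*}
The fundamental theorem of calculus along each truncated lifted path gives the second equality. The limit exists because the left-hand integral converges: it is a combination of the convergent integrals of $\partial_x^2Q$ and $Q$. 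The $L$-dependent quantity is therefore exactly the convergent truncated integral of $\partial_\lambda R$. Consequently $R(z_j)$ has a limit at the infinite end of $\gamma_j$, and $[R(z_j)]_{\partial\gamma_j}$ is well defined. This is the meaning I will give to the boundary term. Its vanishing by hypothesis yields $y_{\Gamma,xx}=Uy_\Gamma$.

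The main obstacle is the bookkeeping at the common finite vertex when branching occurs. The contributions $R(z_j)(\text{vertex})$ from different paths must be evaluated with the lifts specified on each path, and they need not cancel individually. This is exactly why the hypothesis is stated for the \emph{total} boundary term rather than for each path separately. I will make sure the endpoint values are taken as limits along the respective lifts, so that no cancellation is assumed beyond what the hypothesis provides. A secondary point is that the limit $L\to\infty$ must be taken for each path separately before summing. This is legitimate because each integral in \eqref{eq:cycle} and in its derivatives converges individually.
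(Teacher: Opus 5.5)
Your proposal is correct and follows the paper's own argument: you differentiate twice under the integral sign, apply \eqref{eq:identity} column by column, and integrate $\partial_\lambda R$ along each path to obtain the total boundary term. Your truncation argument, and your observation that convergence of $\int_{\gamma_j}\partial_\lambda R(z_j)\dd\lambda$ forces $R(z_j)$ to have a limit at the infinite end of $\gamma_j$, make explicit what the paper's one-line proof leaves implicit.
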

\begin{proof}
Differentiation under the integral sign and~\eqref{eq:identity} give
$(\partial_x^2-U)y_\Gamma=
\sum_jc_j\int_{\gamma_j}\partial_\lambda R\dd\lambda
=\sum_jc_j[R]_{\partial\gamma_j}=0$.
\end{proof}

For our cycles, exponential decay of the canonical columns eliminates
the boundary terms at infinity, and the finite boundary terms cancel
algebraically. The cycles use distinct decay sectors. A contour with both
asymptotic directions in one sector, contractible within the domain of
analyticity of the same integrand, yields zero by Cauchy's theorem.
Distinct sectors alone do not establish linear independence of two
integral solutions; their Wronskian must also be checked.

\subsection{The second Painlev\'e equation}
For $\PII$, use $q=q(x)$, $w=q_x$, and a fixed parameter $\alpha$:
\begin{equation}\label{eq:pii}
 q_{xx}=2q^3+xq+\alpha,\qquad v_{xx}=(6q^2+x)v.
\end{equation}
Introduce the Pauli matrices
\[
 \sigma_1=\begin{pmatrix}0&1\\1&0\end{pmatrix},\quad
 \sigma_2=\begin{pmatrix}0&-\ii\\\ii&0\end{pmatrix},\quad
 \sigma_3=\diag(1,-1).
\]
In the Flaschka--Newell normalization consistent with the sign $+\alpha$,
\begin{equation*}
 \begin{split}
 A_{II}&=-\ii(4\lambda^2+x+2q^2)\sigma_3
 +(4q\lambda-\alpha/\lambda)\sigma_1-2w\sigma_2,\\
 B_{II}&=-\ii\lambda\sigma_3+q\sigma_1.
 \end{split}
\end{equation*}
For $z=(z_1,z_2)^T$, set
\begin{equation*}
 Q_{II}=z_1^2+z_2^2,\quad
 (Q_{II})_x=-2\ii\lambda(z_1^2-z_2^2)+4qz_1z_2,\quad
 R_{II}=-\frac{\ii}{2}(z_1^2-z_2^2).
\end{equation*}
Then~\eqref{eq:identity} holds with $U=6q^2+x$.
The contribution of $-\alpha\sigma_1/\lambda$ cancels in
$\partial_\lambda(z_1^2-z_2^2)$, so the identity also holds for
nonzero $\alpha$.

The formal phase is $\Omega=4\lambda^3/3+x\lambda$.
The column $z_j$ has exponential factor $e^{-(-1)^j\ii\Omega}$ and
decays in the sector
\begin{equation}\label{eq:pii-sectors}
 S_j=\{\lambda:|\arg\lambda-\varphi_j|<\pi/6\},\quad
 \varphi_j=-\pi/6+j\pi/3,\quad j=0,\ldots,5.
\end{equation}
Its leading vector is $e_1=(1,0)^T$ for even $j$ and $e_2=(0,1)^T$
for odd $j$. Along $\lambda=re^{\ii\varphi_j}$, integration proceeds
from infinite $r$ toward a finite vertex.

For $\alpha=0$, the vertex is zero, an ordinary point of the pair.
For $\alpha\ne0$, zero is a regular singular point with local exponents
$\pm\alpha$. The experiment with $\alpha=5\ii$ uses the nonzero vertex
$\lambda_*=1.5$ and fixed lifts to the logarithmic covering.
The two cycles use the groups
\begin{equation*}
 G_A=(0,1,2,3),\qquad G_B=(2,3,4,5).
\end{equation*}
These are weighted sums of integrals of different canonical columns.
Their validity follows directly from Proposition~\ref{prop:cycle}.

\subsection{The fourth Painlev\'e equation}
Let $q=q(x)$ and $h=h(x)$ satisfy
\begin{equation}\label{eq:piv-background}
 \begin{split}
 q_x&=q(2h+q+2x),\\
 h_x&=-h^2-2h(q+x)-2\theta_\infty-4\theta_0^2/q^2.
 \end{split}
\end{equation}
The parameters $\theta_0,\theta_\infty$ are constant.
Setting $p=(h+x+q/2)/2$ gives $q_x=4pq$ and the scalar equation
\begin{equation*}
 q_{xx}=\frac{q_x^2}{2q}+\frac32q^3+4xq^2+2(x^2-\alpha)q+\frac\beta q,
 \qquad \alpha=2\theta_\infty-1,\quad\beta=-8\theta_0^2.
\end{equation*}
On a domain where $q\ne0$, fix a continuous branch of $\sqrt q$.
Substitution of $\delta q=\sqrt q\,y$ into the variational equation gives
\begin{equation*}
 y_{xx}=U_{IV}y,\qquad
 U_{IV}=x^2-\alpha+6xq+\frac{15}{4}q^2+\frac{12\theta_0^2}{q^2}.
\end{equation*}
The absence of a first-derivative term gives a constant Wronskian.
We use this scalar normal form throughout.

The Jimbo--Miwa pair~\cite{JimboMiwa81}, in the required gauge, is
\begin{equation}\label{eq:piv-lax}
 A_{IV}=\begin{pmatrix}a&b\\c&-a\end{pmatrix},\qquad
 B_{IV}=\begin{pmatrix}\lambda+x+q/2&1\\-\chi&-\lambda-x-q/2\end{pmatrix},
\end{equation}
where
\begin{equation*}
 a=\lambda+x+\frac{qh}{2\lambda},\quad
 b=1-\frac q{2\lambda},\quad
 c=-\chi+\frac{qh^2-4\theta_0^2/q}{2\lambda},\quad
 \chi=qh+2\theta_\infty.
\end{equation*}
For a simultaneous column solution $z$, the quadratic expressions
of~\cite{Kiselev26},
\begin{equation}\label{eq:piv-kernel}
 Q_{IV}=\frac{\sqrt q}{2\lambda}\bigl((h-\lambda)z_1^2-z_1z_2\bigr),
 \qquad R_{IV}=-\sqrt q\left(\lambda+2x+\frac{3q}{2}\right)z_1^2
\end{equation}
satisfy~\eqref{eq:identity} with $U=U_{IV}$.
The derivative $(Q_{IV})_x$ is evaluated analytically using
\eqref{eq:piv-background} and~\eqref{eq:piv-lax}.

The phase at infinity and the integration directions are
\begin{equation*}
 \Theta=\frac{\lambda^2}{2}+x\lambda-\theta_\infty\log\lambda,
 \qquad\varphi_n=n\pi/2.
\end{equation*}
For odd $n$, take the first column with exponential factor $e^\Theta$;
for even $n$, take the second with $e^{-\Theta}$. The corresponding
sectors satisfy $|\arg\lambda-\varphi_n|<\pi/4$.
The local exponents at zero are $\pm\theta_0$.

The spectral normalization and the normalization for simultaneous
solutions of both systems must be distinguished. For
$\Psi_k=H_k e^{\Theta\sigma_3}$, $H_k\to I$,
\[
 (\Psi_k)_x=B_{IV}\Psi_k-\Psi_k b_0\sigma_3,\qquad b_0=x+q/2.
\]
If $\mathfrak d(x)=\exp\int_{x_0}^x b_0(\xi)\dd\xi$ and
$\mathsf D=\diag(\mathfrak d,\mathfrak d^{-1})$, then
$\widehat\Psi_k=\Psi_k\mathsf D$ satisfies both systems.
These simultaneous column solutions enter~\eqref{eq:piv-kernel}.

The two computed cycles have indices and vertices
\begin{equation*}
 G_A=(-1,0,1,2),\quad\lambda_A=1.45;\qquad
 G_B=(1,2,3,4),\quad\lambda_B=-1.45.
\end{equation*}
The angles $0$ and $2\pi$ specify different lifts of the same planar
direction. Identifying them without accounting for monodromy changes
the cycle.

\section{Computational construction}\label{sec:algorithm}
\subsection{Reference solution and branches}
The nonlinear background is computed once and shared by the two methods.
In the high-precision experiments, it is continued by local power series
with algebraic coefficient recurrences. A separate continuation of the
same type computes the reference matrix $F_{\rm ref}$ of
\eqref{eq:tangent}, independently of the contour quadrature.

For $\PIV$, $r=1/q$ and $\ell=\log q$ are also continued:
\begin{equation}\label{eq:branches}
 r_x=-r(2h+q+2x),\qquad\ell_x=2h+q+2x,\qquad
 \sqrt q=\exp(\ell/2).
\end{equation}
The initial value of $\ell$ fixes the branch. Continuous continuation
retains it across the cut of the principal square root.
The identities $rq=1$ and $e^\ell=q$ are monitored.

\subsection{Canonical columns and path geometry}
A truncated formal expansion supplies the initial column at
$\lambda=R e^{\ii\varphi_j}$. Its recurrences are given in
Appendix~\ref{app:series}. The exponential factor at the initial point
is factored out of the numerically continued column and restored after
integration; its square is restored for quadratic kernels.

For homogeneous $\PII$, the paths run radially to zero.
For $\alpha=5\ii$, each path follows its ray to
$1.5e^{\ii\varphi_j}$ and then follows successive chords of the circle
of radius $1.5$ to the positive point $1.5$. Each chord subtends at most
$\pi/6$, with the unwrapped angle changing continuously from
$\varphi_j$ to zero. In particular, for $j=5$ it decreases from
$3\pi/2$ to $0$, retaining the specified lift.
For $\PIV$, the inner radius is $1.45$, each chord subtends at most
$\pi/4$, and the terminal angle is $0$ for $G_A$ and $\pi$ for $G_B$.
All chords avoid zero. These rules specify the paths, their lifts,
and their orientations (Figure~\ref{fig:contours}).

\begin{figure}[htbp]
\centering
\resizebox{.98\linewidth}{!}{%
\begin{tikzpicture}[scale=.78,>=Stealth]
 \begin{scope}
 \draw[->,gray] (-3.4,0)--(3.6,0) node[below left] {$\Re\lambda$};
 \draw[->,gray] (0,-3.2)--(0,3.3);
 \foreach \j in {0,...,5}{
  \pgfmathsetmacro{\ang}{-30+60*\j}
  \draw[->] (\ang:2.85)--(\ang:1.05);
  \node at (\ang:3.2) {$j=\j$};
 }
 \draw[dashed,->] (270:1.05)--(240:1.05)--(210:1.05)--(180:1.05)
 --(150:1.05)--(120:1.05)--(90:1.05)--(60:1.05)--(30:1.05)--(0:1.05);
 \draw[fill=white] (0,0) circle (.08);
 \node[below left] at (0,0) {$0$};
 \fill (1.05,0) circle (.06) node[below right] {$\lambda_*$};
 \node at (0,3.8) {$\PII$, $\alpha\ne0$};
 \end{scope}
 \begin{scope}[xshift=8.8cm]
 \draw[->,gray] (-3.4,0)--(3.6,0) node[below left] {$\Re\lambda$};
 \draw[->,gray] (0,-3.2)--(0,3.3);
 \foreach \ang in {0,90,180,270}{\draw[->] (\ang:2.8)--(\ang:1.05);}
 \node[above] at (0,2.9) {$n=1$};
 \node[above] at (-2.8,.15) {$n=2$};
 \node[below] at (0,-2.9) {$n=-1,3$};
 \node[above] at (2.55,.15) {$n=0,4$};
 \draw[dashed,->] (0:1.05)--(-45:1.05)--(-90:1.05)--(-135:1.05)--(-180:1.05);
 \draw[dotted,->] (180:1.05)--(135:1.05)--(90:1.05)--(45:1.05)--(0:1.05);
 \draw[fill=white] (0,0) circle (.08);
 \node[below] at (0,0) {$0$};
 \fill (1.05,0) circle (.06) node[below right] {$\lambda_A$};
 \fill (-1.05,0) circle (.06) node[above left] {$\lambda_B$};
 \node at (0,3.8) {$\PIV$};
 \end{scope}
\end{tikzpicture}
}
\caption{Projections of the spectral paths. Radial arrows point inward
from infinity. On the left, the dashed line shows the inner part of
branch $j=5$. On the right, the dashed line represents branch $n=4$
of $G_B$ (angle $2\pi\to\pi$), and the dotted line represents branch
$n=2$ of $G_A$ ($\pi\to0$). The other inner parts follow the same
successive-chord rule. An open circle marks the excluded singular point.
The drawing is schematic; numerical radii are specified in the text.
The TikZ implementation was prepared with assistance from OpenAI Codex
(OpenAI).}
\label{fig:contours}
\end{figure}

In a closed subsector of decay, the leading cubic or quadratic phase
gives bounds of the form $C r^m e^{-c r^d}$ for the kernel and its first
$x$-derivatives at sufficiently large $r$, with $d=3$ for $\PII$ and
$d=2$ for $\PIV$. The constants can be chosen locally uniformly in $x$
on compact regular background segments. These bounds ensure convergence
and justify differentiation in Proposition~\ref{prop:cycle}.

\subsection{Matching branches and normalizing the basis}
Write $\Sym^2z=(z_1^2,z_1z_2,z_2^2)^T$.
For each group of four columns, solve at its vertex
\begin{equation}\label{eq:nullspace}
 \sum_{j\in G}c_j\Sym^2z_j(\lambda_G,x_0)=0.
\end{equation}
The coefficients are the alternating minors of the $3\times4$ matrix,
scaled by their largest modulus. Rank three gives a one-dimensional
nullspace. Both rank and residual are checked numerically.
The symmetric squares of simultaneous columns satisfy a common linear
system in $x$. Consequently,~\eqref{eq:nullspace} persists with the same
$c_j$ and cancels the finite boundary term for any quadratic $R$.

Let $\Phi$ have columns $(y_{G_A},(y_{G_A})_x)^T$ and
$(y_{G_B},(y_{G_B})_x)^T$. Provided $\det\Phi(x_0)\ne0$, set
\begin{equation*}
 F_C(x)=\Phi(x)\Phi(x_0)^{-1}.
\end{equation*}
The cycle coefficients and initial matrix are fixed once.
Nonsingularity is verified numerically in the experiments.
Distinct contours may yield dependent periods for exceptional data;
the Wronskian condition is therefore part of the construction.

To match simultaneous-solution normalizations at the vertex, compute
\begin{equation*}
 (T_G)_x=B(\lambda_G,x)T_G,\qquad T_G(x_0)=I.
\end{equation*}
Let $z_j^{\rm loc}$ be a freshly computed spectral column and $z_j^0$
its initial value. Using the component $k$ of largest modulus, set
\begin{equation}\label{eq:gamma}
 \gamma_j=\frac{(T_Gz_j^0)_k}{(z_j^{\rm loc})_k},\qquad
 e_{{\rm dir},j}=
 \frac{\norm{\gamma_jz_j^{\rm loc}-T_Gz_j^0}}{\norm{T_Gz_j^0}}.
\end{equation}
The integral contribution is multiplied by $\gamma_j^2$, while the
remaining component checks the column direction. This transport is
determined by the background and the $B$-system.

\subsection{Quadrature}
At each spectral continuation step, expand $Q$ and $Q_x$ in the local
variable $\zeta$. If $Q=\sum_{n=0}^N k_n\zeta^n$, the contribution of
a step $\Delta\lambda$ is
\begin{equation*}
 \sum_{n=0}^N\frac{k_n}{n+1}(\Delta\lambda)^{n+1}.
\end{equation*}
The derivative of the integral solution is obtained in the same way.
In the $\PII$ experiments, step selection includes the series of the
columns and both accumulated integrals. In the reported $\PIV$
experiments, the local step is selected from the columns, and quadrature
accuracy is assessed by external refinement. This implementation
difference is relevant to interpreting the comparison.

\section{Stagewise error control}\label{sec:error}
\subsection{Nonlinear background and initial asymptotics}
The reference continuation is repeated with higher precision, a smaller
tolerance, and a different series order. The components of the original
nonlinear system, $(q,q_x)$ for $\PII$ and $(q,h)$ for $\PIV$, are
compared first, followed by the reference matrices $F_{\rm ref}$
computed on the corresponding backgrounds. This tests both the
Painlev\'e solution and its variations. For $\PIV$, continuity of the
branch in~\eqref{eq:branches} is also checked.

Canonical-column initialization is tested by varying the radius $R$,
the number $N$ of formal terms, and the continuation accuracy.
The expansion at infinity is asymptotic: increasing $N$ requires checking
that the final terms decrease and the results agree under refinement.
The reported accuracy estimates use these computational comparisons.

\subsection{Spectral continuation and path truncation}
The local step is selected from the last three series coefficients with
safety factor $0.7$; Appendix~\ref{app:series} gives the formula.
Near the regular singular point, it is also limited by a fraction of the
distance to zero. Each contour calculation records the direction error
\eqref{eq:gamma}, the residual in~\eqref{eq:nullspace}, and the boundary
term at the finite truncation radius $R$.

The endpoint value of $R(z_j)$ measures the residual boundary term in
the kernel identity. The omitted integral requires a separate tail
estimate: under $|Q|\le C r^m e^{-c r^d}$, it is bounded by
$C\int_R^\infty r^m e^{-cr^d}\dd r$. The numerical tail checks also
vary $R$.

Matching the simultaneous-solution normalization is sensitive to loss
of column direction, which a scalar factor $\gamma_j$ cannot correct.
The check $\det T_G=1$ is therefore supplemented by the second-component
test and external refinement of the spectral solution.

\subsection{Initial basis, total error, and residual}
Let $\widetilde\Phi=\Phi+\Delta\Phi$ denote the computed contour basis,
$F_C=\Phi\Phi(x_0)^{-1}$, and
$\widetilde F_C=\widetilde\Phi\widetilde\Phi(x_0)^{-1}$.
If both initial matrices are invertible, the exact identity is
\begin{equation}\label{eq:initial-error}
 \widetilde F_C(x)-F_C(x)=
 \bigl(\Delta\Phi(x)-F_C(x)\Delta\Phi(x_0)\bigr)
 \widetilde\Phi(x_0)^{-1}.
\end{equation}
It follows by subtracting
$\widetilde F_C\widetilde\Phi(x_0)=\widetilde\Phi$ and
$F_C\widetilde\Phi(x_0)=\Phi+F_C\Delta\Phi(x_0)$.
An initial-basis error propagates throughout the interval. Reducing the
current quadrature error can therefore leave the total solution error
almost unchanged while substantially reducing its subsequent variation.

On a finite observation set $\mathcal T$, define
\begin{equation}\label{eq:errors}
 E(x)=\frac{\norm{F_{\rm num}(x)-F_{\rm ref}(x)}}{\norm{F_{\rm ref}(x)}},
 \qquad E=\max_{x\in\mathcal T}E(x),\qquad
 W_{\rm err}=\max_{x\in\mathcal T}|\det F_{\rm num}(x)-1|.
\end{equation}
Here and below, $\norm C$ is the largest modulus of a matrix entry.
Normalization is pointwise; division by a global maximum of
$F_{\rm ref}$ can obscure errors following rapid growth.

A further check uses contour solutions at neighboring points to form a
finite-difference approximation of $y_{xx}-Uy$. Refining the difference
grid separates differentiation error from integral-evaluation error.
Residual, Wronskian, and monodromy checks measure complementary
components of the numerical error.

\section{Variations of monodromy data}\label{sec:monodromy}
\subsection{The spectral Jacobian}
Let $m(x,X)$ be the chosen vector of monodromy coordinates.
Along a nonlinear solution, $m_x+m_X f=0$. Differentiation with respect
to initial data gives
\begin{equation*}
 \delta m=J(x)Y(x),\qquad J(x)F(x)=J(x_0).
\end{equation*}
The Jacobian $J$ is expressed in the same tangent coordinates as $Y$.

For $\PII$, write $[a,b]=a_1b_2-a_2b_1$ and compute
\begin{equation*}
 s_j=\frac{[z_j,z_{j+2}]}{[z_j,z_{j+1}]},\qquad j=0,1,2,3,
 \qquad m=(s_0,s_1,s_2,s_3)^T.
\end{equation*}
The indices refer to~\eqref{eq:pii-sectors}.
The four coefficients form a redundant set: there are two independent
complex initial-data variations. Adjacent determinants equal $(-1)^j$
in this normalization; their numerical deviations are recorded.

For $\PIV$, the spectrally normalized multipliers satisfy
$s_{1,x}=-2b_0s_1$, $s_{3,x}=-2b_0s_3$,
$s_{2,x}=2b_0s_2$, and $s_{4,x}=2b_0s_4$.
Constant Stokes matrices are obtained by conjugation,
$\widetilde S_k=\mathsf D^{-1}S_k\mathsf D$.
The computations use the redundant vector of gauge-invariant products
\begin{equation*}
 m=(s_1s_2,s_2s_3,s_3s_4,s_1s_4)^T.
\end{equation*}
These products account for the $x$-dependence of the spectral normalization.

Spectral sensitivity with respect to a component $a$ of the nonlinear
state satisfies the inhomogeneous equation
\begin{equation}\label{eq:sensitivity}
 (z_a)_\lambda=A z_a+A_a z.
\end{equation}
At fixed $x$ and equation parameters, its integral form is
\begin{equation}\label{eq:duhamel}
 z_a(\lambda)=G(\lambda,\lambda_0)z_a(\lambda_0)
 +\int_{\lambda_0}^{\lambda}G(\lambda,\mu)A_a(\mu)z(\mu)\dd\mu,
\end{equation}
where $G$ is the transition matrix of the spectral system along the
specified path. Initial derivatives of the explicit truncated asymptotic
series are evaluated by high-precision central differences.
Equation~\eqref{eq:sensitivity} is then solved, and the determinant
quotient is differentiated. At regular points, the result is checked
against finite differences of spectral solutions and~\eqref{eq:duhamel}.

For $\PIV$, the transformation from $Y=(y,y_x)^T$ to background
variations is
\begin{equation*}
 \begin{pmatrix}\delta q\\\delta h\end{pmatrix}
 =\begin{pmatrix}
 \sqrt q&0\\
 -(h+x+3q/2)/(2\sqrt q)&1/(2\sqrt q)
 \end{pmatrix}Y.
\end{equation*}
The computed Jacobian $m_{(q,h)}$ is multiplied on the right by this
matrix, using the same square-root branch as in the kernel.

At each point, the monitoring Jacobian $J$ is obtained from a fresh
spectral calculation with canonical normalization specified at infinity.
The spectral and sensitivity systems determine $J$; contour quadrature
or direct integration of the linearized system determines $F$.
The procedures share the nonlinear background.

\subsection{Drift normalization and conditioning}
The regular $\PII$ and $\PIV$ experiments use the global indicator
\begin{equation}\label{eq:global-drift}
 D(x)=\frac{\norm{J(x)F_{\rm num}(x)-J(x_0)}}{\norm{J(x_0)}}.
\end{equation}
In the transition experiment, the row scales of $J$ differ by about
twenty orders of magnitude. We therefore use separate denominators
$b_j=\norm{J_j(t_0)}$, where $J_j$ is a Jacobian row, and define
\begingroup\small
\begin{equation}\label{eq:row-drift}
 D_0(t)=\max_j\frac{\norm{C_j(t)-J_j(t_0)}}{b_j},\qquad
 D_{\rm osc}(t)=\max_j\frac{\norm{C_j(t)-C_j(t_r)}}{b_j},\qquad
 C_j=J_jF_{\rm num}.
\end{equation}
\endgroup
All denominators used are nonzero. The point $t_r$ marks entry into the
approximately regular oscillatory regime. The second indicator uses
differences of complex rows. Changing the reference point of the
indicator retains the computed solution and its normalization.

With exact $J$, the monodromy-variation error is
$J(F_{\rm num}-F_{\rm ref})$, together with the reference-solution error.
In the maximum-entry norm,
\begin{equation}\label{eq:conditioning}
 D_j(t)\le\kappa_j(t)E(t)+\eta_j(t),\qquad
 \kappa_j(t)=\frac{2\norm{J_j(t)}\norm{F_{\rm ref}(t)}}{b_j},
\end{equation}
where $\eta_j$ includes spectral-monitoring and reference-matrix errors.
The factor $2$ arises from the two columns of $J$.
The direction of the matrix error also affects drift. Matching the
maxima of $E$ thus leaves the pointwise errors and values of $D$ free
to differ.

\subsection{Accuracy allocation by successive refinement}\label{sec:budget}
Fix a target $\eta>0$ in normalized-drift units, and set
$b=\norm{\widehat J(x_0)}>0$, where $\widehat J$ is the computed spectral
Jacobian. Exact $J,F$ with $JF=J(x_0)$ and approximations
$\widehat J,\widehat F$ satisfy
\begin{equation}\label{eq:budget-identity}
 \widehat J\widehat F-\widehat J(x_0)
 =J(\widehat F-F)+(\widehat J-J)\widehat F
   -(\widehat J(x_0)-J(x_0)).
\end{equation}
Writing $e_F=\norm{\widehat F-F}$ and $e_J=\norm{\widehat J-J}$ gives
\begin{equation}\label{eq:budget-bound}
 \frac{\norm{\widehat J\widehat F-\widehat J(x_0)}}{b}
 \le\frac{2\norm J e_F+2e_J\norm{\widehat F}+e_J(x_0)}{b}.
\end{equation}
The factors $2$ arise from the two summands in each product entry.
Identity~\eqref{eq:initial-error} further separates $e_F$ into current-
and initial-basis contributions.

Computational errors are assessed by successive refinement.
Let $F_C^{(0)},F_C^{(1)},F_C^{(2)}$ be three contour calculations with
decreasing tolerances on a common background; let $F_R^-,F_R^+$ be
reference matrices obtained by refining the background, precision, and
series order; and let $J^-,J^+$ be spectral Jacobians obtained by refining
the radius, formal expansion, and spectral continuation.
All maxima below use the same grid $\mathcal T$, with
$b=\norm{J^+(x_0)}$:
\begin{equation}\label{eq:budget-indicators}
 \begin{aligned}
 c_\ell&=\max_{\mathcal T}
   \frac{\norm{J^+(F_C^{(\ell)}-F_C^{(\ell-1)})}}{b},\quad\ell=1,2,\\
 r&=\max_{\mathcal T}\frac{\norm{J^+(F_R^+-F_R^-)}}{b},\\
 j&=\max_{\mathcal T}
   \frac{\norm{(J^+-J^-)F_C^{(2)}}+\norm{J^+(x_0)-J^-(x_0)}}{b},\\
 d_R&=\max_{\mathcal T}\frac{\norm{J^+F_R^+-J^+(x_0)}}{b}.
 \end{aligned}
\end{equation}
The indicators $r,j,d_R$ monitor the reference calculation and extraction
of monodromy variations. The indicators $c_1,c_2$ measure contour
refinement with spectral sensitivity taken into account.
Accuracy is selected as follows.
\begin{enumerate}
\item Refine the background and spectral Jacobian until
$r,j,d_R\le\eta/5$.
\item Compute three contour levels, including their initial bases.
Check $c_2\le\eta/5$ and $c_2/c_1\le1/2$ for $c_1>0$.
Separately vary the radius and formal-series order, requiring
$\max_{\mathcal T}\norm{J^+\Delta F_C}/b\le\eta/5$.
\item If a threshold is exceeded, refine the corresponding stage and
repeat the check. If the $c_\ell$ stagnate, also refine the radius,
formal-series order, and working precision.
\item For the accepted level, report solution error, Wronskian error,
and drift from the independent spectral monitor.
\end{enumerate}
The fractions $1/5$ and contraction factor $1/2$ specify the safety
margins used here. This is an a posteriori diagnostic based on observed
increments. The rigorous bound~\eqref{eq:budget-bound} requires upper
bounds on $e_F,e_J$;~\eqref{eq:budget-indicators} supplies computational
indicators of these errors. Separately varying $R$ and the series order
tests truncation error shared by the three tolerance levels.

\section{Numerical experiments}\label{sec:experiments}
\subsection{Comparison protocol}
The direct method uses the adaptive Dormand--Prince $5(4)$
pair~\cite{DormandPrince80}. Its rational coefficients and all stages
are evaluated in the same multiple-precision arithmetic as the contour
algorithm. The last step before each observation point is shortened
to end exactly at that point. Absolute tolerance is one tenth of
relative tolerance, with an RMS normalized local-error test.

The direct-method tolerance is selected using $E$ alone.
A pair is considered matched when $1/1.1\le E_{\rm RK}/E_C\le1.1$.
Monodromy-variation drift and Wronskian error are then computed for
the matched pair. All tabulated maxima refer to finite observation grids.

In the regular experiments, timings include preparation of the initial
contour basis, normalization transport, spectral continuation, and
quadrature. The shared background, reference matrix, and independent
Jacobian are excluded from both methods' costs. Tolerance-search time
is recorded separately. The reported timings are single measurements.

\subsection{A regular oscillatory \texorpdfstring{$\PII$}{PII} background}
Set $\alpha=0$, $q(-1)=0.2$, and $q_x(-1)=0.1$, and integrate from
$x=-1$ to $x=-13$, observing $-1,-2,-4,-7,-10,-13$.
Regularity on the negative half-axis follows from an energy argument.
With $s=-x$ and $u(s)=q(-s)$, the energy
\[
 \mathcal E=\frac12u_s^2+\frac{s}{2}u^2-\frac12u^4
\]
satisfies $\mathcal E_s=u^2/2$.
Initially, $\mathcal E(1)=0.0242<1/8$ and $|u(1)|<\sqrt{1/2}$.
Set $D(s)=s^2/8-\mathcal E(s)$. While $u^2<s/2$,
\[
 D_s=\frac{s}{4}-\frac{u^2}{2}>0,\qquad
 D(s)\ge D(1)=0.1008.
\]
If $u(s_1)^2=s_1/2$ at a first point $s_1>1$, continuity gives
$D(s_1)\ge0.1008$. Substitution into the energy also gives
\[
 \mathcal E(s_1)=\frac12u_s(s_1)^2+\frac{s_1^2}{8},\qquad
 D(s_1)=-\frac12u_s(s_1)^2\le0.
\]
The contradiction proves that $u^2<s/2$ persists.
The potential part of the energy is nonnegative in this region, hence
$u_s^2\le2\mathcal E<s^2/4$. Both $(u,u_s)$ remain bounded on each
finite interval $s\ge1$, ensuring continuation over the half-axis.

Both methods use 70 decimal digits; the spectral monitor uses 80.
Table~\ref{tab:pii} gives the results.
\begin{table}[htbp]\centering\small
\caption{Regular $\PII$ background. Indicators are defined in
\eqref{eq:errors} and~\eqref{eq:global-drift}.}\label{tab:pii}
\input{table_pii.tex}
\end{table}
Contour drift is smaller by factors of $1.2$--$1.3$.
The contour calculation is faster at the finest level and slower at
the other two. Spectral continuation and quadrature use order-32 series,
so the timing comparison also reflects the different algorithmic orders.

The 50- and 70-digit reference calculations differ in $F$ by
$9.44\cdot10^{-35}$, and the drift of $JF_{\rm ref}$ is
$1.47\cdot10^{-29}$. Increasing the radius from $5$ to $5.5$, the
number of terms from $64$ to $80$, and refining continuation changes
the Jacobian at $x=-13$ relatively by $1.73\cdot10^{-29}$.
The Wronskian of the unnormalized contour basis at $x=-1$ is
approximately $-2.14031314-1.13613048\ii$.

A five-point finite-difference residual test at $x=-7$ gives
$4.75\cdot10^{-13}$ and $2.97\cdot10^{-14}$ for steps $10^{-3}$ and
$5\cdot10^{-4}$. Their ratio agrees with the fourth-order approximation
of the second derivative. On the zero background, the integral and its
derivative are also checked against the corresponding Airy function.

Controlling quadrature itself is important. At the same internal
tolerance $10^{-12}$, including integral series in step selection
reduces the error from $7.52\cdot10^{-9}$ to $4.03\cdot10^{-18}$;
time changes from $4.24$ to $7.88$ s. This compares two local-error
controllers with the background, radius, and asymptotic order fixed.

\subsection{A complex \texorpdfstring{$\PIV$}{PIV} background with rapid growth}
The independent-variable path and initial data are
\begin{equation}\label{eq:piv-data}
 \begin{gathered}
 x=x_0+t,\quad x_0=0.23+0.07\ii,\quad -6\le t\le6,\\
 q_0=1.17+0.19\ii,\quad p_0=0.41-0.16\ii,\quad
 h_0=2p_0-x_0-q_0/2,\\
 \theta_0=0.37+0.11\ii,\quad\theta_\infty=-0.23+0.07\ii.
 \end{gathered}
\end{equation}
The original binary floating-point representations are embedded exactly
in the calculation;~\eqref{eq:piv-data} displays their decimal notation.
The normalization $F(0)=I$ refers to $(y,y_x)$.
Observation points are $t=-6,-3,0,0.62,3,6$.
Near $t=0.62$, the fundamental matrix exhibits rapid growth and poor
conditioning. The background branch is continued in both directions.
This experiment concerns rapid growth on the specified path; no pole
location or distance-to-pole estimate is assigned to it.

\begin{table}[htbp]\centering\small
\caption{Complex $\PIV$ background, with 70-digit arithmetic for both methods.}
\label{tab:piv}\input{table_piv.tex}
\end{table}
The spectral truncation radius is $14$, with $40$ formal asymptotic
terms. Spectral tolerances are $10^{-23}$ and $10^{-26}$ at the two
levels. RK-to-contour drift ratios are $1.92$ and $14.03$, while
contour-to-RK cost ratios are $9.82$ and $3.50$.
The direct method preserves the Wronskian more accurately
(Table~\ref{tab:piv}).

Errors are strongly nonuniform. At $t=0.62$ on the second level, the
measured contour-matrix difference from the reference is
$4.87\cdot10^{-32}$, compared with $6.95\cdot10^{-16}$ for RK.
The corresponding drifts are $1.58\cdot10^{-26}$ and $2.00\cdot10^{-10}$.
These values quantify local agreement with the reference, whose
accuracy is assessed by separate refinement.
\begin{figure}[htbp]\centering
\includegraphics[width=.98\linewidth]{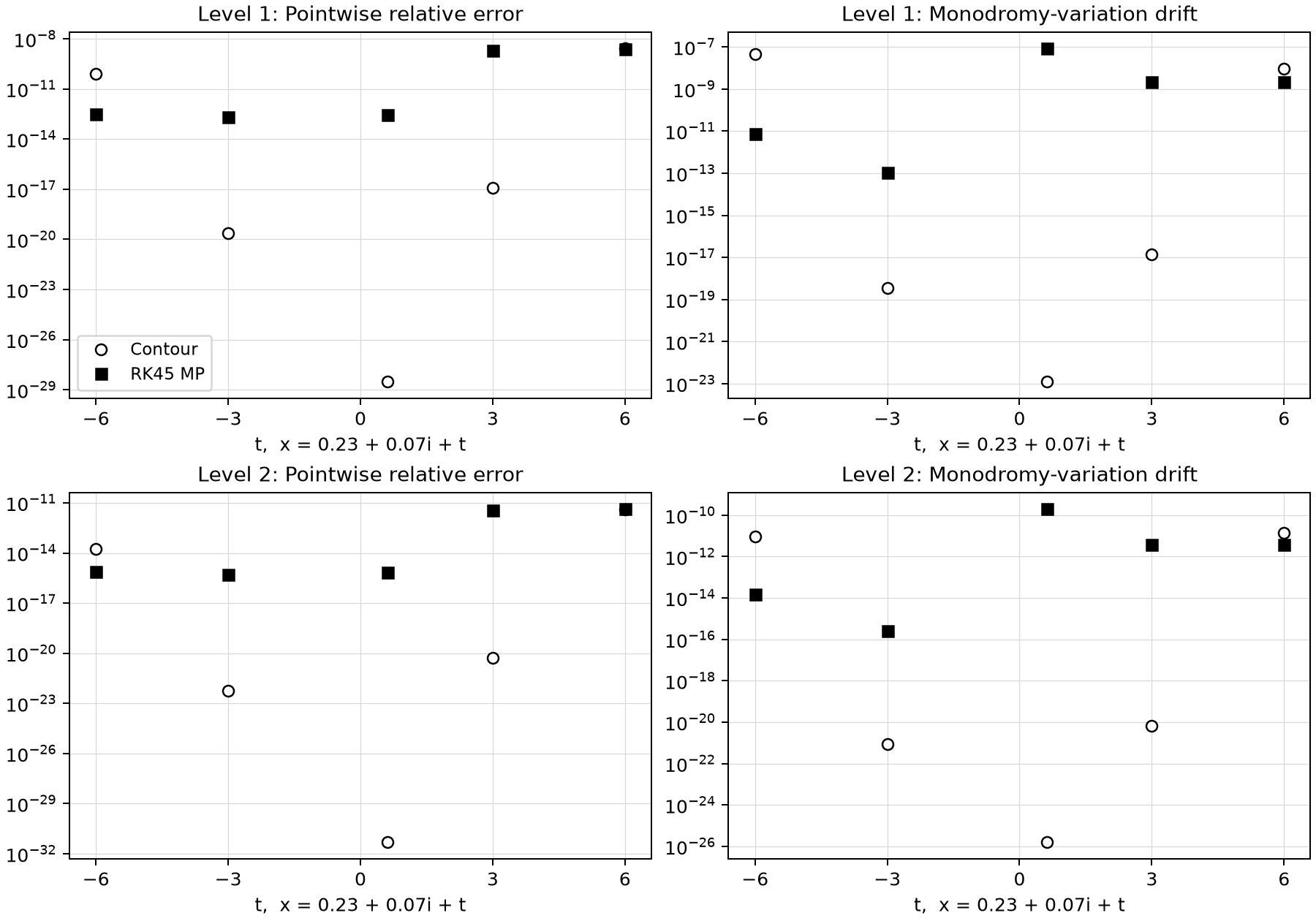}
\caption{Solution errors and monodromy-variation drift at two $\PIV$
accuracy levels. Markers represent computed observation values.
Logarithmic plots show points following initialization.}
\label{fig:piv}
\end{figure}

Refining the spectral monitor to 80 digits, radius $18$, and $56$ formal
terms gives multiplier-product drift $1.45\cdot10^{-31}$ and
$JF_{\rm ref}$ drift at most $1.87\cdot10^{-31}$.
Such independent monitoring is essential over the long path: an early
implementation gave spurious drift of order one or larger even for
the reference matrix.

\subsection{Hard loss of stability for \texorpdfstring{$\PII$}{PII}}
Consider the equation studied in~\cite{Kiselev01},
\begin{equation}\label{eq:rigid}
 \varepsilon^2u_{tt}+2u^3+tu=1,\qquad\varepsilon>0.
\end{equation}
With $p=\varepsilon u_t$, the variational state
$Y=(v,\varepsilon v_t)^T$ satisfies
\begin{equation*}
 Y_t=\varepsilon^{-1}
 \begin{pmatrix}0&1\\-(6u^2+t)&0\end{pmatrix}Y.
\end{equation*}
The transformation to~\eqref{eq:pii} and its variations is
\begin{equation*}
 \begin{gathered}
 x=-t\varepsilon^{-2/3},\quad q=\ii\varepsilon^{-1/3}u,
 \quad w=-\ii\varepsilon^{-2/3}p,\quad\alpha=\ii/\varepsilon,\\
 J_{(u,p)}=J_{(q,w)}\diag(\ii\varepsilon^{-1/3},-\ii\varepsilon^{-2/3}).
 \end{gathered}
\end{equation*}
Variations are taken at fixed $\varepsilon$.
If $\eta$ is evaluated using $Q_{II}$, the physical state is
$(v,\varepsilon v_t)=(-\ii\varepsilon^{1/3}\eta,
\ii\varepsilon^{2/3}\eta_x)$.

The initial point is $t_0=-7$. Let $r(t)$ be the lowest real root of
$2r^3+tr=1$, and put $k=6r^2+t$. Prescribe
\begin{equation*}
 u(t_0)=r(t_0)+\varepsilon^2b(t_0),\quad
 p(t_0)=\varepsilon(r_t(t_0)+\varepsilon^2b_t(t_0)),\quad
 b=\frac{12r^3}{k^4}-\frac{2r}{k^3}.
\end{equation*}
This finite asymptotic approximation specifies an initial-value problem.
Its numerical solution error and the error of approximating the
distinguished asymptotic solution are separate quantities.

The algebraic roots coalesce at $u_*=-2^{-2/3}$,
$t_*=-3\,2^{-1/3}$. In the scaling
$t=t_*+\varepsilon^{4/5}\tau$, $u=u_*+\varepsilon^{2/5}V$, the equation is
\begin{equation*}
 V_{\tau\tau}+6u_*V^2+u_*\tau
 +\varepsilon^{2/5}(2V^3+\tau V)=0.
\end{equation*}
The leading inner profile reduces to the first Painlev\'e equation,
with linearization $Z_{\tau\tau}+12u_*VZ=0$.
This scale explains the need to resolve the transition layer.
A pole of the inner approximation must be distinguished from a
singularity of the exact solution at fixed $\varepsilon$.

On the real axis,~\eqref{eq:rigid} has the coercive energy
$\mathcal H=p^2/2+u^4/2+tu^2/2-u$, with $\mathcal H_t=u^2/2$.
On any bounded interval, a constant $C$ can be chosen such that
$\mathcal H+C\ge c(p^2+u^4+1)$ and
$\mathcal H_t\le C_1(\mathcal H+C)$. These bounds ensure regularity
of the real solution throughout each finite interval. The experiment
therefore follows an asymptotic transition on a regular exact solution.

For $\varepsilon=0.2$, passage through the first large excursion is
computed on $[-7,-1]$, and the same background is then continued to
$t=8$. The initial matrix and contour coefficients are retained.
There are 17 maxima, the first at $t=-1.622750491$.
Entry into the approximately regular regime is defined as the first
maximum at which the relative changes in both period and height from
the preceding values are below $5\%$. This is the ninth maximum,
\begin{equation*}
 t_r=4.044134213\ldots.
\end{equation*}
During the last oscillations, the period changes by $2.3\%$ per cycle
and the peak height by $1.5\%$. Approximately eight periods are followed
after $t_r$, with observations at maxima and intermediate phases.

\begin{table}[htbp]\centering\small
\caption{Continuation of $\PII$ from $t=-7$ to $t=8$ with
$\varepsilon=0.2$. The indicators $D_0,D_{\rm osc}$ are defined
in~\eqref{eq:row-drift}.}
\label{tab:transition}\input{table_transition.tex}
\end{table}
With the constant matched RK tolerance
$1.8829854605461\cdot10^{-14}$, the subsequent-drift ratio is $6.24$
in favor of contour evaluation, whereas the total-drift ratio is $1.46$.
Table~\ref{tab:late} gives pointwise values in the regular regime.
\begin{table}[htbp]\centering\small
\caption{Subsequent drift after $t_r$ for the matched pair.}
\label{tab:late}\input{table_late_drift.tex}
\end{table}

Continuation to larger positive $t$ requires a more accurate spectral
monitor. At $t=8$, the previous tolerance $10^{-32}$ gives an error
of $2.52\cdot10^{-6}$ in the most sensitive row on the reference
solution. At tolerance $10^{-42}$, this decreases to $2.21\cdot10^{-18}$.
Final monitoring of the new points uses 130 digits, $160$ formal terms,
local order $48$, and tolerance $10^{-48}$. Comparison with an
independently refined initial $J(t_0)$ gives residual drift at most
$3.79\cdot10^{-24}$.

At $t=8$, the earlier contour algorithm with tolerance $10^{-14}$ gives
relative solution error $1.91\cdot10^7$ and column-direction error
$0.548$. For the new points in Table~\ref{tab:transition}, spectral
continuation and quadrature are tightened to $10^{-34}$ and local order
$48$, retaining the initial contour basis. Both methods use 90-digit
arithmetic. The experiment identifies the need to revise the settings
used for the first large excursion when continuing to later oscillations.

\subsection{Accuracy allocation after the transition}
In an additional experiment, RK uses tolerance
$5.399977346781\cdot10^{-14}$ up to $t=-1$, then continues the same
approximate state with tolerance $10^{-16}$. The total error remains
$5.50\cdot10^{-13}$, while subsequent drift decreases to
$8.64\cdot10^{-16}$ (last row of Table~\ref{tab:transition}).

For the contour method, the endpoint $t=8$ is separately refined from
tolerance $10^{-34}$ to $10^{-40}$ with the same initial basis.
The deviation of the variations from their previously computed values
at $t_r$ decreases from $2.52\cdot10^{-14}$ to $9.76\cdot10^{-19}$,
while the total solution error remains approximately $5.14\cdot10^{-13}$.
This refinement is an endpoint experiment; the regular-interval values
in the tables retain their original accuracy level.
\begin{figure}[htbp]\centering
\includegraphics[width=\linewidth]{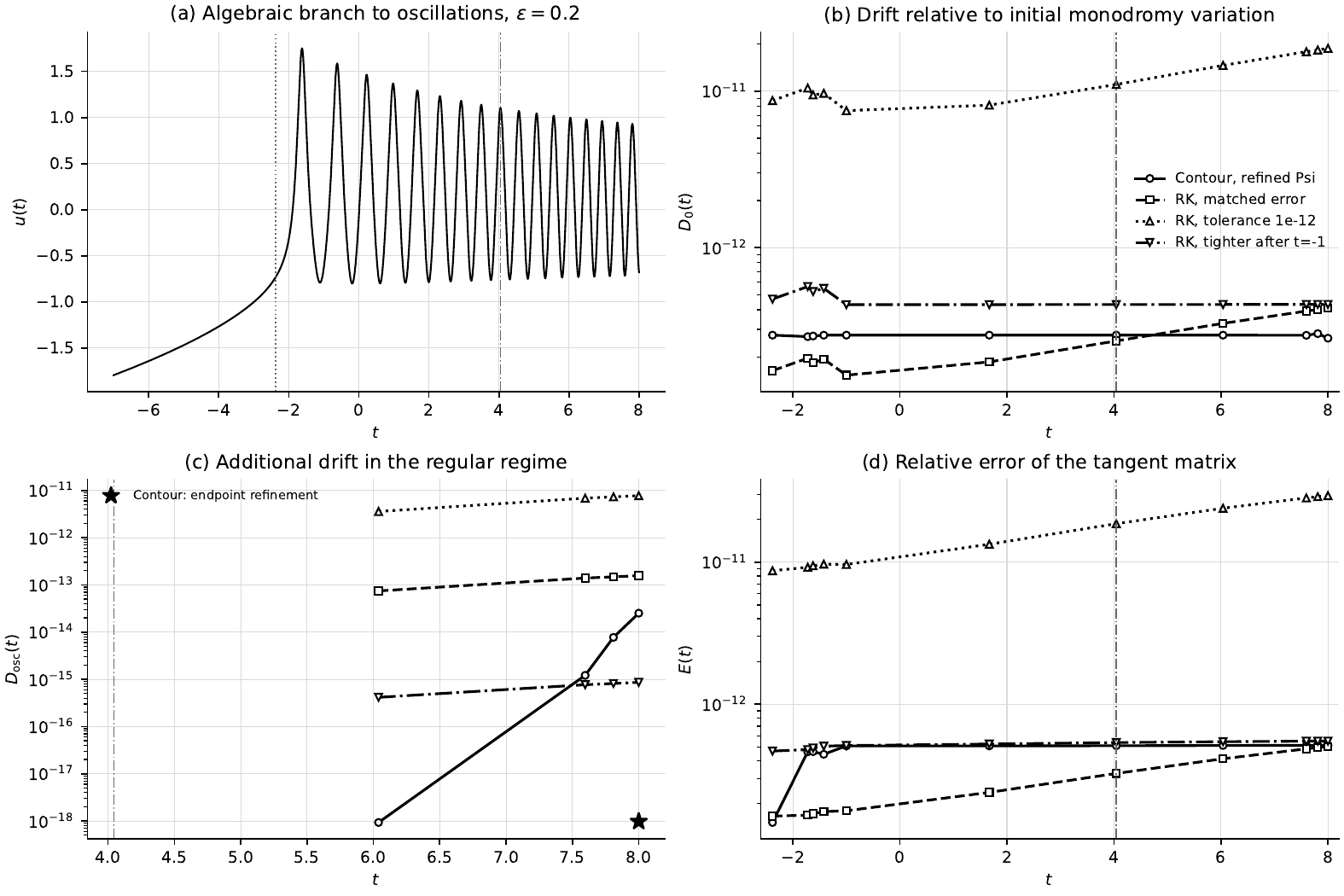}
\caption{Transition from the algebraic approximation to oscillations and
the subsequent drift. The dash-dotted vertical line marks $t_r$.
The star in panel (c) denotes the separately refined endpoint.
Lines connect observation values; initial zeros are omitted from the
logarithmic plots.}\label{fig:rigid}
\end{figure}

In this experiment, refining the background changes the reference
matrices by $1.21\cdot10^{-33}$. Residual drift of the monitoring
$JF_{\rm ref}$ after $t_r$ is $3.79\cdot10^{-24}$, substantially below
the observed method errors. The subsequent-drift indicator separates an
inherited offset from later error accumulation; the initial offset
remains part of the total error.

\subsection{Initial-data and parameter variations}\label{sec:parameters}
Five additional cases are considered. For $\PII$ at $x_0=-1$, choose
$(q_0,w_0)=(0.15,0.1)$, $(0.25,0.1)$, and $(0.2,0.15)$, with the same interval
and observation grid as in the regular experiment. These data satisfy
the energy inequalities ensuring regularity on the negative half-axis.
For $\PIV$, replace $\theta_0$ by $0.35+0.11\ii$ and $0.39+0.11\ii$,
retaining the other data in~\eqref{eq:piv-data}.
All five cases were specified before the comparative calculations.

Each case uses three contour tolerance levels:
$10^{-6},10^{-9},10^{-12}$ for $\PII$ and
$10^{-23},10^{-26},10^{-29}$ for $\PIV$.
The middle level is matched with RK; the third provides refinement.
RK tolerance is selected by the same criterion based on $E$.
Radius and formal-order pairs are $(4,48)$ and $(14,40)$, respectively.
Background accuracy is tested by comparing a 55-digit calculation with
tolerance $10^{-27}$ and order $32$ against a 70-digit calculation
with tolerance $10^{-36}$ and order $40$.

Spectral monitoring uses 80 and 90 digits. For $\PII$, the triples
$(R,N,\tau)$ are refined from $(5,64,10^{-36})$ to
$(5.5,80,10^{-40})$, with local orders $40$ and $44$.
For $\PIV$, the triples are $(18,56,10^{-36})$ and
$(19,64,10^{-40})$, both at local order $32$.
The drifts in Table~\ref{tab:parameters} use the refined Jacobian.
\begin{table}[htbp]\centering\footnotesize
\setlength{\tabcolsep}{3pt}
\caption{Additional cases with matched solution errors. The first three
rows concern $\PII$, and the last two concern $\PIV$.}
\label{tab:parameters}
\input{table_parameters.tex}
\end{table}
\FloatBarrier
For $q_0=0.25$, $w_0=0.1$, RK drift is smaller; in the other two
$\PII$ cases, contour drift is smaller. The two additional $\PIV$
backgrounds retain the contour advantage in this indicator.
Thus both the sign and size of the advantage depend on the background,
even with a common normalization and matched values of $E$.
The global analogue of~\eqref{eq:conditioning} is
$\kappa(x)=2\norm{J(x)}\norm{F_{\rm ref}(x)}/\norm{J(x_0)}$.
Its maximum over the observation grid ranges from $7.27$ to $10.18$
in the additional $\PII$ cases and equals $1.54\cdot10^9$ and
$1.65\cdot10^9$ in the $\PIV$ cases.

The criterion in Section~\ref{sec:budget} is tested with $\eta=10^{-9}$.
A separate contour calculation at the third-level tolerance changes
$(R,N)$ to $(4.5,64)$ for $\PII$ and $(15,48)$ for $\PIV$.
Its difference from the third level is measured by
$\max_{\mathcal T}\norm{J^+\Delta F_C}/b$.
Table~\ref{tab:budget} gives the checks. All five cases satisfy the
stagewise thresholds and contraction condition; independent monitoring
confirms drift below $\eta$.
\begin{table}[htbp]\centering\footnotesize
\setlength{\tabcolsep}{3pt}
\caption{Stagewise refinement in normalized-drift units.}
\label{tab:budget}
\input{table_budget.tex}
\end{table}
\FloatBarrier
Successive contour-increment ratios $c_2/c_1$ range from
$3.13\cdot10^{-4}$ to $3.87\cdot10^{-4}$.
The actual third-level drift is $(2.61\text{--}3.99)\cdot10^{-18}$
for $\PII$, and $1.49\cdot10^{-14}$ and $2.13\cdot10^{-14}$ for $\PIV$.

\FloatBarrier
\section{Discussion}\label{sec:discussion}
The contour representation gives two basis solutions through quadrature
of spectral functions. Its numerical accuracy depends on the complete
sequence: nonlinear background, asymptotic normalization, continuation
of $\Psi$, summation of contour contributions, and inversion of the
initial basis. Sensitivity at any stage can determine the final error.
Column direction and consistent branch continuation are particularly
important.

For regular $\PII$, drift differences are moderate, and the parameter
study contains advantages for both methods. For $\PIV$, larger drift
reductions accompany different pointwise-error distributions and poorer
Wronskian preservation. Solution error, monodromy drift, Wronskian error,
and cost should therefore be reported together. Comparing absolute drift
between equations or monodromy coordinates requires accounting for
normalization and conditioning.

During hard loss of stability, exact monodromy variations are conserved
before, within, and after the transition. Their numerical change in the
regular oscillatory regime is sensitive to the subsequent accuracy of
both methods. Tightening RK tolerance and refining the endpoint contour
quadrature demonstrate the role of error allocation described by
\eqref{eq:initial-error}. Matching the overall solution error $E$ alone
does not determine how accurately invariants are preserved.

The conclusions concern the specified backgrounds, observation grids,
and Dormand--Prince $5(4)$ implementation. In particular, the transition
experiment uses a real solution regular at fixed $\varepsilon$.
Continuation around an actual pole in the complex $x$-plane or through
a change of local coordinates near $q=0$ for $\PIV$ would require
separate tests. Accuracy estimates here are based on refinement and
independent diagnostic identities, with the a posteriori indicators
interpreted as computational evidence rather than interval enclosures.

\section{Conclusion}
A contour algorithm has been implemented for fundamental matrices of
the linearized $\PII$ and $\PIV$ equations, with explicit canonical
columns, paths, and matching conditions. Monodromy variations are checked
independently of contour construction and direct integration.
The experiments cover regular regimes, rapid growth, and hard loss of
stability followed by approximately regular oscillations. Separating
accumulated offsets from subsequent drift identifies the effects of
spectral conditioning and stagewise accuracy allocation on numerical
preservation of monodromy variations.

\appendix
\section{Coefficient recurrences}\label{app:series}
At an ordinary point of a spectral path, let
$z=\sum_{n\ge0}z_n\zeta^n$ and $A=\sum_{n\ge0}A_n\zeta^n$. Then
\begin{equation*}
 z_{n+1}=\frac1{n+1}\sum_{k=0}^n A_kz_{n-k}.
\end{equation*}
For sensitivities, add the coefficient convolution of $A_a z$.
The $1/\lambda$ term is evaluated by series division at
$\lambda_c\ne0$: for $d=z/\lambda$,
$d_n=(z_n-d_{n-1})/\lambda_c$, $d_{-1}=0$.
Quadratic products use $(ab)_n=\sum_{k=0}^n a_kb_{n-k}$.

For a local series of degree $N$, the step satisfies
\[
 |\Delta\lambda|\le0.7\min\left\{h_{\max},\,
 \min_{\substack{a,\ n=N-2,N-1,N\\c_{a,n}\ne0}}
 \left(\frac{\tau\max(1,|c_{a,0}|)}{|c_{a,n}|}\right)^{1/n}\right\}.
\]
Here $c_{a,n}$ are coefficients of the monitored components, $\tau$ is
the internal tolerance, and $h_{\max}$ limits the step and its proximity
to a singularity. The component sets for the two equations are specified
in Section~\ref{sec:algorithm}. The local indicator is supplemented by
external refinement.

The formal matrix $H=I+\sum_{n\ge1}H_n\lambda^{-n}$ satisfies
$H_\lambda=AH-H\Theta_\lambda\sigma_3$.
For $\PIV$, with $A=\lambda\sigma_3+A_0+A_{-1}/\lambda$, its coefficients
satisfy
\begin{equation*}
 [\sigma_3,H_{n+1}]+A_0H_n-xH_n\sigma_3+A_{-1}H_{n-1}
 +\theta_\infty H_{n-1}\sigma_3+(n-1)H_{n-1}=0,
\end{equation*}
for $n\ge0$, $H_0=I$, $H_{-1}=0$.
The off-diagonal entries are determined by the current equation and
the diagonal entries by the next coefficient equation.
For $\PII$, the same substitution uses phase $-\ii\Omega$.
In particular, for $\mathcal H=xq^2+q^4-w^2+2\alpha q$,
\[
 H_1=\frac12\begin{pmatrix}\ii\mathcal H&-\ii q\\
 \ii q&-\ii\mathcal H\end{pmatrix}.
\]
Substitution of recursively generated coefficients into the matrix
identities provides a separate check of the formal normalization.

\section{Parameters and reproducibility}\label{app:reproduce}
Regular $\PII$ uses internal tolerances $10^{-6},10^{-9},10^{-12}$;
all three levels have truncation radius $4$ and formal order $48$.
The two principal $\PIV$ levels are specified after Table~\ref{tab:piv}.
In the transition experiment, the original six points are
$-7,t_*,t_{\rm peak}-0.1,t_{\rm peak},t_{\rm peak}+0.2,-1$,
where $t_{\rm peak}$ is the time of the first maximum. Additional points are
\[
 1.670299696,\quad4.044134213,\quad6.038698755,\quad
 7.596161693,\quad7.809343175,\quad8.
\]
Full saved values of the nodes are used. Identical binary offsets are
embedded exactly in multiple-precision arithmetic for both methods
and the spectral monitor.

Computations use mpmath with gmpy2 acceleration; symbolic identity
checks use SymPy. Tests cover Lax-pair compatibility, quadratic kernels,
changes of variables, formal coefficients, spectral sensitivities,
and properties of the direct integrator. The full suite contains
42 automated tests, complemented by the refinement experiments reported
above.

Tables are generated directly from saved numerical records.
The records retain initial data, arithmetic parameters, monitoring
matrices, trial tolerances, unsuccessful preliminary calculations,
and source-file checksums. For $\PII$, the shared background also
contains $\int q\dd x$, used for normalization transport; its cost is
included in background preparation. Timings for the extended transition
experiment combine reused verified points and an endpoint preflight
calculation, so the timing comparison is confined to the regular
experiments.

The computational supplement contains algorithm implementations,
automated checks, original matrices in high-precision decimal notation,
all refinement parameters, and table-generation tools. A SHA-256
manifest identifies its files. The parameter study is reproduced by
recomputing the backgrounds, spectral sensitivities, contour integrals,
and matched RK trajectories; radius and order refinement is a separate
calculation.

OpenAI Codex (OpenAI) assisted with implementation, examination of
derivations, symbolic and numerical test development, and preparation
of figure-generating code under the author's direction. Reported values
are outputs of the supplied numerical programs. The data plots are
generated from saved numerical records, and the contour diagram is
specified by the paths described in Section~\ref{sec:algorithm}.

%% file: table_pii.tex
\begin{tabular}{clrrrr}
\toprule
Level & Method & $E$ & $D$ & $W_{\rm err}$ & Time (s)\\
\midrule
1 & Contour & $2.236\cdot10^{-9}$ & $1.901\cdot10^{-9}$ & $4.528\cdot10^{-9}$ & 5.18\\
1 & RK & $2.456\cdot10^{-9}$ & $2.475\cdot10^{-9}$ & $4.727\cdot10^{-9}$ & 0.28\\
2 & Contour & $8.728\cdot10^{-13}$ & $6.681\cdot10^{-13}$ & $2.150\cdot10^{-13}$ & 6.41\\
2 & RK & $8.094\cdot10^{-13}$ & $8.075\cdot10^{-13}$ & $1.603\cdot10^{-12}$ & 1.36\\
3 & Contour & $4.031\cdot10^{-18}$ & $3.279\cdot10^{-18}$ & $2.811\cdot10^{-18}$ & 7.88\\
3 & RK & $4.032\cdot10^{-18}$ & $4.012\cdot10^{-18}$ & $8.037\cdot10^{-18}$ & 15.70\\
\bottomrule
\end{tabular}

%% file: table_piv.tex
\begin{tabular}{clrrrr}
\toprule
Level & Method & $E$ & $D$ & $W_{\rm err}$ & Time (s)\\
\midrule
1 & Contour & $2.655\cdot10^{-9}$ & $4.399\cdot10^{-8}$ & $7.159\cdot10^{-8}$ & 44.84\\
1 & RK & $2.422\cdot10^{-9}$ & $8.445\cdot10^{-8}$ & $2.763\cdot10^{-12}$ & 4.56\\
2 & Contour & $4.212\cdot10^{-12}$ & $1.428\cdot10^{-11}$ & $1.549\cdot10^{-11}$ & 53.67\\
2 & RK & $4.309\cdot10^{-12}$ & $2.003\cdot10^{-10}$ & $6.514\cdot10^{-15}$ & 15.32\\
\bottomrule
\end{tabular}

%% file: table_transition.tex
\begin{tabular}{lrrr}
\toprule
Method & $E$ & $\max D_0$ & $\max D_{\rm osc}$\\
\midrule
Contour & $5.139\cdot10^{-13}$ & $2.816\cdot10^{-13}$ & $2.520\cdot10^{-14}$\\
RK, $\tau=10^{-12}$ & $2.943\cdot10^{-11}$ & $1.881\cdot10^{-11}$ & $7.777\cdot10^{-12}$\\
RK, matched & $5.043\cdot10^{-13}$ & $4.099\cdot10^{-13}$ & $1.571\cdot10^{-13}$\\
RK, variable tolerance & $5.500\cdot10^{-13}$ & $5.629\cdot10^{-13}$ & $8.641\cdot10^{-16}$\\
\bottomrule
\end{tabular}

%% file: table_late_drift.tex
\begin{tabular}{rrr}
\toprule
$t$ & $D_{\rm osc}$, contour & $D_{\rm osc}$, RK\\
\midrule
6.038699 & $9.302\cdot10^{-19}$ & $7.461\cdot10^{-14}$\\
7.596162 & $1.215\cdot10^{-15}$ & $1.396\cdot10^{-13}$\\
7.809343 & $7.745\cdot10^{-15}$ & $1.484\cdot10^{-13}$\\
8.000000 & $2.520\cdot10^{-14}$ & $1.571\cdot10^{-13}$\\
\bottomrule
\end{tabular}

%% file: table_parameters.tex
\begin{tabular}{lrrrrr}
\toprule
Data & $E_C$ & $E_{\rm RK}$ & $D_C$ & $D_{\rm RK}$ & $D_{\rm RK}/D_C$\\
\midrule
$q_0=.15,\ w_0=.1$ & $8.846\cdot10^{-13}$ & $8.879\cdot10^{-13}$ & $5.739\cdot10^{-13}$ & $8.832\cdot10^{-13}$ & 1.539\\
$q_0=.25,\ w_0=.1$ & $8.652\cdot10^{-13}$ & $8.679\cdot10^{-13}$ & $1.033\cdot10^{-12}$ & $8.693\cdot10^{-13}$ & 0.841\\
$q_0=.2,\ w_0=.15$ & $8.966\cdot10^{-13}$ & $8.993\cdot10^{-13}$ & $7.486\cdot10^{-13}$ & $8.968\cdot10^{-13}$ & 1.198\\
$\theta_0=.35+.11\ii$ & $4.776\cdot10^{-12}$ & $4.895\cdot10^{-12}$ & $1.290\cdot10^{-11}$ & $2.048\cdot10^{-10}$ & 15.871\\
$\theta_0=.39+.11\ii$ & $3.676\cdot10^{-12}$ & $3.677\cdot10^{-12}$ & $1.574\cdot10^{-11}$ & $1.998\cdot10^{-10}$ & 12.692\\
\bottomrule
\end{tabular}

%% file: table_budget.tex
\begin{tabular}{lrrrr}
\toprule
Data & $c_2$ & $\max(r,j,d_R)$ & Refined $R,N$ & $D_C^{(2)}$\\
\midrule
$q_0=.15,\ w_0=.1$ & $5.738\cdot10^{-13}$ & $9.014\cdot10^{-30}$ & $2.978\cdot10^{-18}$ & $2.614\cdot10^{-18}$\\
$q_0=.25,\ w_0=.1$ & $1.033\cdot10^{-12}$ & $2.965\cdot10^{-29}$ & $3.657\cdot10^{-18}$ & $3.990\cdot10^{-18}$\\
$q_0=.2,\ w_0=.15$ & $7.486\cdot10^{-13}$ & $2.780\cdot10^{-29}$ & $3.430\cdot10^{-18}$ & $3.290\cdot10^{-18}$\\
$\theta_0=.35+.11\ii$ & $1.290\cdot10^{-11}$ & $1.732\cdot10^{-28}$ & $7.904\cdot10^{-24}$ & $1.493\cdot10^{-14}$\\
$\theta_0=.39+.11\ii$ & $1.573\cdot10^{-11}$ & $1.645\cdot10^{-28}$ & $8.647\cdot10^{-24}$ & $2.128\cdot10^{-14}$\\
\bottomrule
\end{tabular}

%% file: declarations.tex
\section*{Funding}
This research received no specific grant from funding agencies in the
public, commercial, or not-for-profit sectors.

\section*{Declaration of competing interest}
The author declares no competing interests.

\section*{Data availability}
The computational supplement accompanying this manuscript contains
the numerical code, automated tests, high-precision output records,
and instructions for reproducing the tables and figures.

\section*{Declaration of generative AI and AI-assisted technologies in the manuscript preparation process}
OpenAI Codex assisted with drafting and translation, examination of
mathematical derivations, and development of symbolic, numerical,
and figure-generation code. The author directed the work and reviewed
and revised the resulting text and arguments. Computational verification
used SymPy, direct contour quadrature, spectral sensitivity calculations,
differential-equation residuals, Wronskians, and successive refinement.
The author takes responsibility for the content of the manuscript.